\newtheorem{prop}{Proposition}
\newtheorem{defn}{Definition}
\newtheorem{propert}{Properties}
\newtheorem{cor}{Corollary}
\newtheorem{exmp}{Example}
\newtheorem{obs}{Observation}
\newtheorem{rem}{Remark}
\title{Monomiality and a New Family of Hermite Polynomials}
\author{Giuseppe Dattoli, Silvia Licciardi\footnote{Corresponding author: silviakant@gmail.com, silvia.licciardi@enea.it, orcid 0000-0003-4564-8866, tel. nr: +39 06-94005421.},\\[1.3ex]
	ENEA - Frascati Research Center, Via Enrico Fermi 45, 00044, \\Frascati, Rome, Italy  
}
\date{\today}
\begin{document}
	
	\maketitle

	\begin{abstract}
In this article we go deeply into the formulation and meaning of the monomiality principle and employ it to study the properties of a set of polynomials, which, asymptotically, reduce to the ordinary two variable Kampè-dè-Fèrièt family.  We derive the relevant differential equations and discuss the associated orthogonality properties, along with the relevant generalized forms.
\end{abstract}

\textbf{Keywords}\\
Special functions 33C52, 33C65, 33C99, 33B10, 33B15; Hermite polynomials 33C45; operators theory 44A99, 47B99, 47A62.

	\section{Introduction}

	The Hermite polynomials belong to the Appèll family \cite{Appel} and the relevant properties can be conveniently framed within the context of the monomiality principle \cite{Bell,DG}. This is a modern formulation of a point of view, tracing back to Steffensen \cite{Steffensen2,Steffensen3,Steffensen}, but even to older researches by Jeffery (for a recent account see Ref. \cite{Jeffrey}), Boole \cite{Boole} and to other speculations developed almost two hundred years ago. These researches deepened their roots into the calculus of differences \cite{Jordan}, the first to be recognized as amenable for a symbolic interpretation. The rules underlying monomiality are fairly simple and can be formulated as reported below \cite{Bell,DG,Germano,Laguerre,Laguerre2} 
	
	\begin{propert}
		$\forall x\in\mathbb{R}, \forall n\in\mathbb{N}$, If a couple of operators $\hat{P},\hat{M}$ are such that:
	\begin{itemize}
		\item [a)] they do exist along with a differential realization,
        \item [b)] they can be embedded to form a Weyl algebra \cite{Babusci,Gallardo,Ottaviani}, namely if commutator is such that $[\hat{P},\hat{M}]=\hat{1}$,
        \item [c)] it is possible to univocally define a polynomial set such that:
	     \begin{equation}\label{p0}
	      i) \;p_0(x)=1, \qquad\qquad ii)\;\hat{P} p_0(x)=0, \qquad\qquad iii)\;p_n(x)=\hat{M}^n 1,
	     \end{equation}
	     	\end{itemize}
	then it follows that
		\begin{itemize}
		\item [d)]
	\begin{equation}\label{pn}
	\hat{M} p_n(x)=\hat{M}^{n+1}1=p_{n+1}(x),
	\end{equation}
		\item [e)] 	\begin{equation}\label{pn2}
		 \hat{P}p_n(x)=\hat{P}\hat{M}^n 1=n p_{n-1}(x)
		\end{equation}
		\end{itemize}
	\noindent and the polynomials $p_n(x)$ are said \textit{Quasi-Monomials}.
	\end{propert}
	\begin{proof}
	 Eq. \eqref{pn2} needs few lines of comment. We rearrange the operator product  $\hat{P}\hat{M}^n$ as\footnote{We remind that $[\hat{P},\hat{M}]=1\Rightarrow \hat{P}\hat{M}-\hat{M}\hat{P}=1$}  (see \cite{Louisell})
	\begin{equation}\label{key}
	\begin{split}
	\hat{P}\hat{M}\hat{M}^{n-1}&=(\hat{M}\hat{P}+1)\hat{M}^{n-1}=\hat{M}\hat{P}\hat{M}^{n-1}+\hat{M}^{n-1}=\\
	& =\hat{M}^2\hat{P}\hat{M}^{n-2}+2\hat{M}^{n-1}=\dots= \hat{M}^n\hat{P}+n\hat{M}^{n-1},
		\end{split}
	\end{equation}
	which eventually yields
	\begin{equation}\label{key}
\hat{P}\hat{M}^n 1=	\left(  \hat{M}^n\hat{P}+n\hat{M}^{n-1}\right) 1= \hat{M}^n\hat{P}\;1+n\hat{M}^{n-1}1.
	\end{equation}
	Being $\hat{M}^n\hat{P}\;1=0$ as a consequence of the $ii)$ of Eqs. \eqref{p0}, and using property $iii)$ too, we state the correctness of Eq. \eqref{pn2}.
	\end{proof}
	
	\begin{rem}
	The important point we like to convey is that the essence of the discussion on Monomiality is the existence of the operators $\hat{M}$ (multiplicative), which univocally define the set of polynomials $p_n(x)$ (not vice-versa), and $\hat{P}$ acting on the polynomials  as an ordinary derivative.
	\end{rem}
	
	\noindent According to the above statement polynomial set like Appèll, Sheffer \cite{Sheffer}, Boas Buck \cite{Boas}…can be ascribed to the monomial family, while others like e. g. Legendre, Chebyshev, Jacobi…\cite{L.C.Andrews,Abramovitz} are not be framed within such a context.
\\
	
	After these remarks, aimed at clarifying the frame in which we are going to develop our speculations, we remind that the $\hat{M}$, $\hat{P}$ operators, defining the Appèl family, are defined by
	\begin{equation}\label{MP}
	\hat{M}=x+\dfrac{A'(\sigma)}{A(\sigma)}\mid_{\sigma=\partial_x}, \qquad\qquad\qquad \hat{P}=\partial_x\;,
	\end{equation}
	where $A(\sigma)$ is an analytic function. \\
	
	According to our introductory remarks, the explicit form of the Appèl polynomials is obtained from the identity (property $iii)$ of Eq. \eqref{p0})
	\begin{equation}\label{an}
	a_n(x)=\left(x+ \dfrac{A'(\partial_x)}{A(\partial_x)}\right)^n 1. 
	\end{equation}
	The use of standard operational rules allows to cast Eq. \eqref{an} in a more convenient form. 
	
	\begin{cor}
	We note indeed that \cite{Sh,Zhu}
	\begin{equation}\label{key}
	a_n(x)=\left(x+ \dfrac{A'(\partial_x)}{A(\partial_x)}\right)\left(x+ \dfrac{A'(\partial_x)}{A(\partial_x)}\right)^{n-1} 1
	\end{equation}
	and noting that
	\begin{equation}\label{key}
	x+ \dfrac{A'(\partial_x)}{A(\partial_x)}=A(\partial_x)x\left( A(\partial_x)\right) ^{-1},
	\end{equation}
	we can write, by iteration 
	\begin{equation}\label{apart}
	a_n(x)=\left( A(\partial_x)x\left( A(\partial_x)\right) ^{-1}\right)^n =A(\partial_x)x^n
	\end{equation}
	Acording to Eq. \eqref{apart}, the generating function of Appèl polynomials reads 
	\begin{equation}\label{sap}
	\sum_{n=0}^\infty \dfrac{t^n}{n!}a_n(x)=A(\partial_x)e^{tx}=A(t)e^{tx}.
	\end{equation}
	It is evident that it consists of two contributions: the exponential term and $A(t)$, which will be defined as the “amplitude”.
    	\end{cor}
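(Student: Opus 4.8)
The whole statement follows from a single operator identity, so the plan is to prove that one and then collect the consequences. The first displayed line of the Corollary is merely the associativity $\hat{M}^{n}=\hat{M}\,\hat{M}^{n-1}$ applied to Eq.~\eqref{an}, so nothing is needed there. The substantive point is the conjugation formula
\begin{equation}
x+\frac{A'(\partial_x)}{A(\partial_x)}=A(\partial_x)\,x\,\bigl(A(\partial_x)\bigr)^{-1},
\end{equation}
and I would derive it from the Weyl relation $[\partial_x,x]=\hat{1}$. An easy induction on $k$ gives $[\partial_x^{\,k},x]=k\,\partial_x^{\,k-1}$; writing $A(\sigma)=\sum_{k\ge 0}c_k\sigma^{k}$ and using linearity then yields $[A(\partial_x),x]=A'(\partial_x)$, i.e. $A(\partial_x)\,x=x\,A(\partial_x)+A'(\partial_x)$. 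Right-multiplying by $\bigl(A(\partial_x)\bigr)^{-1}$ — a legitimate formal operator provided $A(0)\neq 0$, and with the Appèl normalization $A(0)=1$ — produces the claimed identity.

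Granting this, the factorisation of the $n$-th power is immediate: in
\begin{equation}
\Bigl(A(\partial_x)\,x\,\bigl(A(\partial_x)\bigr)^{-1}\Bigr)^{n}=A(\partial_x)\,x^{n}\,\bigl(A(\partial_x)\bigr)^{-1}
\end{equation}
all the interior pairs $\bigl(A(\partial_x)\bigr)^{-1}A(\partial_x)$ telescope to the identity. Applying both sides to the constant $1$ and using $\bigl(A(\partial_x)\bigr)^{-1}1=A(0)^{-1}=1$ gives $a_n(x)=A(\partial_x)\,x^{n}$, which is Eq.~\eqref{apart}. (Equivalently, one can bypass the telescoping by a one-line induction on $n$ from the monomiality recurrence $a_n=\hat{M}a_{n-1}$ of Eq.~\eqref{pn}: the inductive step is $\hat{M}\,A(\partial_x)x^{n-1}=A(\partial_x)\,x\,\bigl(A(\partial_x)\bigr)^{-1}A(\partial_x)\,x^{n-1}=A(\partial_x)\,x^{n}$.)

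The generating function is then obtained by summing term by term and pulling the $x$-independent operator out of the sum,
\begin{equation}
\sum_{n=0}^{\infty}\frac{t^{n}}{n!}\,a_n(x)=A(\partial_x)\sum_{n=0}^{\infty}\frac{(t\,x)^{n}}{n!}=A(\partial_x)\,e^{tx},
\end{equation}
after which the eigenfunction property $\partial_x e^{tx}=t\,e^{tx}$ — so that $f(\partial_x)e^{tx}=f(t)e^{tx}$ for any analytic $f$ — gives $A(\partial_x)e^{tx}=A(t)e^{tx}$ and hence Eq.~\eqref{sap}.

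I expect the only real obstacle to be the bookkeeping of the formal-operator framework rather than any genuine computation: one must fix once and for all that $A(\partial_x)$ and $\bigl(A(\partial_x)\bigr)^{-1}$ are power series in $\partial_x$ acting on polynomials (so that every manipulation above reduces to a finite computation on each $x^{n}$), and that $A(0)=1$ is assumed. With that understood, the identity $[A(\partial_x),x]=A'(\partial_x)$ is the whole content of the Corollary; everything else is telescoping and term-by-term summation.
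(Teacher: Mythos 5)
Your proposal is correct and follows essentially the same route as the paper, which states the conjugation identity $x+A'(\partial_x)/A(\partial_x)=A(\partial_x)\,x\,(A(\partial_x))^{-1}$ and the telescoping of its $n$-th power without further justification (citing the literature). You merely fill in the details the paper leaves implicit — the commutator computation $[A(\partial_x),x]=A'(\partial_x)$, the normalization $A(0)=1$ needed when acting on the constant $1$, and the eigenfunction property of $e^{tx}$ — all of which are the intended justifications.
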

	
\begin{cor}
In the case of the two variable Hermite polynomials ($HP$), we have that the amplitude is specified by
	\begin{equation}\label{Ae}
	A(t)=e^{yt^2}
	\end{equation}
	with the multiplicative operator being explicitly defined by
	\begin{equation}\label{key}
	\hat{M}=x+2y\partial_x\;.
	\end{equation}
	The associated polynomial family is, accordingly, provided by
\cite{SLicciardi}
	\begin{equation}\label{Her}
	H_n(x,y)=(x+2y\partial_x)^n 1.
	\end{equation}
	The use of the Crofton identity \cite{Khan}
	\begin{equation}\label{key}
	e^{y\partial_x^m}f(x)=f\left( x+m\;y\;\partial_x^{m-1}\right) e^{y\partial_x^m}
	\end{equation}
	(or of identities \eqref{apart}-\eqref{sap} as well) allows to cast Eq. \eqref{Her} in the form
	\begin{equation}\label{Hexp}
	H_n(x,y)=e^{y\partial_x^2}x^n.
	\end{equation}
	The expansion of the exponential operator in Eq. \eqref{Hexp}, along with the relevant action on the monomial $x^n$, yields the explicit form of the two variable Hermite polynomials \cite{Babusci,Appel}, namely
	\begin{equation}\label{H2}
	H_n(x,y)=n!\sum_{r=0}^{\lfloor \frac{n}{2}\rfloor}\dfrac{x^{n-2r}y^r}{(n-2r)!r!}.
	\end{equation}
\end{cor}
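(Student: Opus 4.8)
The plan is to specialize the Appèll machinery of the previous corollary to the amplitude $A(t)=e^{yt^2}$ of Eq. \eqref{Ae} and then read off the asserted identities one at a time. With $A(\sigma)=e^{y\sigma^2}$ one has $A'(\sigma)/A(\sigma)=2y\sigma$, so the multiplicative operator $\hat M=x+A'(\partial_x)/A(\partial_x)$ of Eq. \eqref{MP} collapses to $\hat M=x+2y\partial_x$; feeding this into the defining relation $p_n(x)=\hat M^n 1$ (property $iii)$ of Eq. \eqref{p0}) gives Eq. \eqref{Her} by definition. The Weyl-algebra requirement is immediate as well, since $[\partial_x,\,x+2y\partial_x]=[\partial_x,x]=\hat 1$, so the monomiality scheme genuinely applies to this family.

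For the operational (exponential) representation \eqref{Hexp} I would invoke the Crofton identity with $m=2$, applied with $f(x)=x^n$ and then evaluated on the constant function $1$: the right-hand side becomes $(x+2y\partial_x)^n\,e^{y\partial_x^2}1=(x+2y\partial_x)^n 1=H_n(x,y)$ because $e^{y\partial_x^2}1=1$, while the left-hand side is $e^{y\partial_x^2}x^n$, which yields Eq. \eqref{Hexp}. Equivalently one may bypass Crofton and use the intertwining relation $x+A'(\partial_x)/A(\partial_x)=A(\partial_x)\,x\,A(\partial_x)^{-1}$ together with the telescoping argument behind Eq. \eqref{apart}, which gives $H_n(x,y)=A(\partial_x)x^n=e^{y\partial_x^2}x^n$ directly; I would present one route in detail and note the other as an alternative.

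The closed form \eqref{H2} then follows by expanding $e^{y\partial_x^2}=\sum_{r\ge 0}\frac{y^r}{r!}\partial_x^{2r}$ and acting termwise on $x^n$: since $\partial_x^{2r}x^n=\frac{n!}{(n-2r)!}x^{n-2r}$ vanishes as soon as $2r>n$, the series truncates at $r=\lfloor n/2\rfloor$ and reproduces the stated sum after pulling out $n!$. A useful consistency check is to sum $\sum_{n\ge 0}\frac{t^n}{n!}H_n(x,y)$ and apply Eq. \eqref{sap}, recovering the classical generating function $e^{yt^2+tx}$. I do not expect any genuine obstacle here; the only point deserving a word of justification is the handling of the operator exponential $e^{y\partial_x^2}$ and of the Crofton identity, which is harmless in this setting because all the operators involved act on polynomials, so every operator series terminates after finitely many terms and no convergence issue arises.
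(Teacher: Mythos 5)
Your proposal is correct and follows essentially the same route as the paper: specialize the Appèll multiplicative operator to $A(t)=e^{yt^2}$, pass from $(x+2y\partial_x)^n 1$ to $e^{y\partial_x^2}x^n$ via the Crofton identity (or equivalently via the conjugation identity behind Eq. \eqref{apart}), and expand the exponential operator on $x^n$ to get the closed form. You merely make explicit the steps the paper leaves implicit, including the harmless truncation of the operator series on polynomials.
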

	The operational identity in Eq. \eqref{Hexp} is particularly pregnant from the mathematical point of view. It states that the two variable Hermite \eqref{H2} are solutions of the heat equation and can be used as a pivotal tool to prove the orthogonal properties of this polynomial family \cite{Babusci,SLicciardi}.\\
	
	In this article we consider the polynomial family generated by 
	\begin{equation}\label{newA}
	A(p)=\left( 1+\dfrac{y}{N}p^2\right)^N , \qquad\forall N\in\mathbb{N} ,
	\end{equation}
	study the relevant properties and look at the possibility of defining an associated orthogonal set. 
	
	\section{Quasi-Hermite and Appéll Sequences}
	
	In this section we exploit the general properties of the Appéll polynomials, discussed in the introductory remarks, to state the properties of the associated polynomials.
	
	\begin{defn}
	 Appéll polynomials with amplitude \eqref{newA}, are explicitly defined\footnote{Definition comes according to the discussion of the previous section. It should be noted that\\ $\lim\limits_{N\rightarrow\infty}H_n(x,y;N)=e^{y\partial_x^2}x^n=H_n(x,y)$.} through the identity
	\begin{equation}\label{Hoper}
	H_n(x,y;N)=\left( 1+\dfrac{y}{N}\partial_x^2\right)^N x^n 
	\end{equation}
    and they will be called \textit{Quasi-Hermite-Polynomials} ($QHP$) .
	\end{defn}
	
	\begin{propert}\label{Pr2}
	The relevant recurrences of $QHP$ are obtained after noting that, for this specific case, we get 
\begin{equation}\label{key}
\dfrac{A'(\partial_x)}{A(\partial_x)}=\dfrac{2\;y\;\partial_x}{\left( 1+\dfrac{y}{N}\partial_x^2\right)} ,
\end{equation}
so
	\begin{equation}\label{HN}
	\begin{split}
	& 1) \; \partial_x H_n(x,y;N)=n\left( 1+\dfrac{y}{N}\partial_x^2\right)^N x^{n-1}=nH_{n-1}(x,y;N),\\
	& 2) \; H_{n+1}(x,y;N)=\left( x+\dfrac{2\;y\;\partial_x}{1+\frac{y}{N}\partial_x^2}\right)H_n(x,y;N) ,\\
	& 3) \; H_{n+1}(x,y;N)-xH_n(x,y;N)-2nyH_{n-1}(x,y;N)=\\
	& =\dfrac{y}{N}n(n-1)\left( xH_{n-2}(x,y;N)-H_{n-1}(x,y;N)\right) .
	\end{split}
	\end{equation}
		\end{propert}
	
	\begin{proof}
Properties $1)$ and $2)$ are obtained from the realization of the derivative and multiplicative operators given in Eqs. \eqref{MP}. About the third one, it is the result of some algebraic\footnote{We simplify the writing for brevity by omitting the arguments of the Hermite's.} steps:
\begin{itemize}
	\item [i)] From property $2)$ we write
	\begin{equation*}\label{key}
	\left( 1+\dfrac{y}{N}\partial_x^2\right)H_{n+1}= \left( 1+\dfrac{y}{N}\partial_x^2\right)xH_n+2y\partial_xH_n 
	\end{equation*}
	which provides, from property $1)$,
	\item [ii)] 
	\begin{equation*}\label{key}
	H_{n+1}+\dfrac{y}{N}n(n+1)H_{n-1}=xH_n+\dfrac{y}{N}\left(2nH_{n-1}+n(n-1)xH_{n-2} \right) +2nyH_{n-1} 
	\end{equation*}
	and finally
	\item [iii)] 
	\begin{equation*}
	\begin{split}
	H_{n+1}-xH_n-2nyH_{n-1} &=\dfrac{y}{N}n\left(\left( 2-(n+1)\right)H_{n-1}+(n-1)xH_{n-2}  \right) =\\
	& =\dfrac{y}{N}n(n-1)\left( xH_{n-2}-H_{n-1}\right) .
		\end{split}
	\end{equation*}
\end{itemize}
	\end{proof}	
		
	\begin{prop}
	The explicit form of the $QHP$ is inferred from Eq. \eqref{Hoper}, which yields
	\begin{equation}\label{key}
a) \;	H_{n}(x,y;N)=\sum_{r=0}^{\min\left[N,\;\lfloor \frac{n}{2}\rfloor \right] } \binom{N}{r}\left(\dfrac{y}{N} \right)^r\dfrac{n!}{(n-2r)!}x^{n-2r}, \quad \forall x,y\in\mathbb{R}, \forall n,N\in\mathbb{N}
	\end{equation}
	and the relevant differential equation is 
	\begin{equation}\label{eqd}
	b) \; \left( x+\dfrac{2\;y\;\partial_x}{1+\frac{y}{N}\partial_x^2}\right)\partial_x H_n(x,y;N)=nH_n(x,y;N) .
	\end{equation}
		\end{prop}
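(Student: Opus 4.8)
The plan is to treat the two parts separately, each reducing to material already in hand. For part (a), I would start from the operational definition \eqref{Hoper} and expand $\left(1+\frac{y}{N}\partial_x^2\right)^N$ by the ordinary binomial theorem, which is legitimate because the summands $1$ and $\frac{y}{N}\partial_x^2$ commute:
\[
\left(1+\frac{y}{N}\partial_x^2\right)^N=\sum_{r=0}^N \binom{N}{r}\left(\frac{y}{N}\right)^r\partial_x^{2r}.
\]
Applying this to $x^n$ and using $\partial_x^{2r}x^n=\frac{n!}{(n-2r)!}\,x^{n-2r}$ for $2r\le n$ and $\partial_x^{2r}x^n=0$ otherwise, the sum truncates automatically at $r\le\lfloor n/2\rfloor$; together with the upper limit $r\le N$ imposed by the binomial coefficient this yields the stated summation range $r\le\min[N,\lfloor n/2\rfloor]$ and the closed form in (a).

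For part (b), I would invoke the monomiality mechanism. By Property~1 of \eqref{Pr2} the operator $\hat P=\partial_x$ lowers the index, $\partial_x H_n(x,y;N)=n\,H_{n-1}(x,y;N)$, and by Property~2 the operator $\hat M=x+\frac{2y\partial_x}{1+\frac{y}{N}\partial_x^2}$ raises it, $\hat M H_n(x,y;N)=H_{n+1}(x,y;N)$. Composing the two gives $\hat M\hat P\,H_n=\hat M\bigl(n H_{n-1}\bigr)=n\,\hat M H_{n-1}=n\,H_n$, which is exactly \eqref{eqd}. Equivalently, this is the specialization to the present amplitude of the universal identity $\hat M\hat P\,p_n=n\,p_n$ valid for any quasi-monomial set, itself an immediate consequence of \eqref{pn} and \eqref{pn2}.

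The only point worth a word of care is the meaning of the inverse operator $\left(1+\frac{y}{N}\partial_x^2\right)^{-1}$ occurring in $\hat M$: on a polynomial it is defined by the \emph{terminating} Neumann series $\sum_{k\ge 0}\left(-\frac{y}{N}\right)^k\partial_x^{2k}$, so no question of convergence arises; alternatively one may clear it by multiplying \eqref{eqd} through by $1+\frac{y}{N}\partial_x^2$ to obtain a genuine (higher-order) differential equation. I do not anticipate a real obstacle: part (a) is a one-line binomial computation and part (b) follows directly from the recurrences already established in Property~\ref{Pr2}.
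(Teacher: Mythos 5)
Your proposal is correct and follows essentially the same route as the paper: part (a) by the ordinary binomial expansion of $\left(1+\frac{y}{N}\partial_x^2\right)^N$ acting on $x^n$, and part (b) by composing the lowering and raising recurrences of Properties \ref{Pr2} to get $\hat M\hat P\,H_n=nH_n$. Your added remark on interpreting the inverse operator via a terminating series on polynomials is a sensible clarification the paper leaves implicit.
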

	
	\begin{proof}
		a) $\forall x,y\in\mathbb{R}, \forall n,N\in\mathbb{N}$, we use binomial Newton to write
		\begin{equation*}\label{key}
	H_{n}(x,y;N)=\sum_{r=0}^N \binom{N}{r}\left(\dfrac{y}{N} \right)^r \partial_x^{2r}x^n=\sum_{r=0}^{\min\left[N,\;\lfloor \frac{n}{2}\rfloor \right] } \binom{N}{r}\left(\dfrac{y}{N} \right)^r\dfrac{n!}{(n-2r)!}x^{n-2r}	.
		\end{equation*}	
		b) 	The relevant differential equation is easily obtained by applying Eqs. \eqref{HN} in Properties \ref{Pr2}.
	\end{proof}	

\begin{cor}\label{c3}
	After a few algebraic manipulations, Eq. \eqref{eqd} can be reduced to the following third order $ODE$
	\begin{equation}\label{key}
	\dfrac{y}{N}xz^{'''}+y\left( 2-\dfrac{n-2}{N}\right)z^{''}+xz'=nz,\qquad\qquad z=H_n(x,y;N) 
	\end{equation}
which, evidently, tends to the ordinary (two variables) Hermite equation, for large $N$ values.
\end{cor}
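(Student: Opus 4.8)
The plan is to start from Eq.~\eqref{eqd}, clear the operatorial denominator $1+\tfrac{y}{N}\partial_x^2$, and then expand everything in terms of ordinary derivatives of $z=H_n(x,y;N)$. First I would set $z=H_n(x,y;N)$ and rewrite Eq.~\eqref{eqd} as
\begin{equation*}
x\,z' + \frac{2\,y\,\partial_x}{1+\frac{y}{N}\partial_x^2}\,z' = n\,z .
\end{equation*}
Since $z$ is a polynomial, the rational operator acting on $z'$ is the unique polynomial $w$ with $\bigl(1+\tfrac{y}{N}\partial_x^2\bigr)w = 2\,y\,\partial_x z' = 2\,y\,z''$. Hence, isolating $w = n\,z - x\,z'$ and applying the genuine differential operator $1+\tfrac{y}{N}\partial_x^2$ to both sides, I obtain the operator-free identity
\begin{equation*}
\Bigl(1+\tfrac{y}{N}\partial_x^2\Bigr)\bigl(n\,z - x\,z'\bigr) = 2\,y\,z'' .
\end{equation*}

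Next I would expand the left-hand side by the product rule, the only thing to watch being the non-commutativity of $x$ and $\partial_x$: one finds $\partial_x(n z - x z') = (n-1)z' - x z''$ and $\partial_x^2(n z - x z') = (n-2)z'' - x z'''$. Substituting,
\begin{equation*}
n\,z - x\,z' + \tfrac{y}{N}\bigl[(n-2)z'' - x\,z'''\bigr] = 2\,y\,z'' ,
\end{equation*}
and collecting the coefficients of $z'''$, $z''$, $z'$ and $z$ yields precisely
\begin{equation*}
\tfrac{y}{N}\,x\,z''' + y\Bigl(2 - \tfrac{n-2}{N}\Bigr) z'' + x\,z' = n\,z .
\end{equation*}
Finally, letting $N\to\infty$ the $z'''$ term and the $\tfrac{n-2}{N}$ correction drop out, leaving $2\,y\,z'' + x\,z' = n\,z$, i.e.\ the ordinary two-variable Hermite equation, consistently with the footnote after Eq.~\eqref{Hoper}.

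The computation is entirely routine; the only delicate point is the legitimacy of ``multiplying through'' by $1+\tfrac{y}{N}\partial_x^2$. This is harmless because that factor is an honest polynomial differential operator, and the rational operator $\tfrac{A'(\partial_x)}{A(\partial_x)}$ in Eq.~\eqref{HN} was defined exactly so that $1+\tfrac{y}{N}\partial_x^2$ cancels its denominator; no convergence issue arises since $H_n$ is a polynomial, so the series expansion of $\bigl(1+\tfrac{y}{N}\partial_x^2\bigr)^{-1}$ truncates. Thus the main (and essentially only) obstacle is the careful bookkeeping of the non-commuting operators $x$ and $\partial_x$ when expanding the right-hand side.
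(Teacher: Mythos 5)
Your proposal is correct and takes essentially the same route as the paper: both clear the operatorial denominator by applying $1+\tfrac{y}{N}\partial_x^2$ and then expand the second derivative of the product with $x$ by the Leibniz rule. You merely isolate the rational term as $nz-xz'$ before applying the operator, whereas the paper applies it to the whole equation and keeps $2y\partial_x^2 z$ on the left, but the bookkeeping and the resulting ODE coincide.
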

	
	\begin{proof}
		By starting from Eq. \eqref{eqd} we write
		\begin{equation*}
		\begin{split}
		& \left( \left( 1+\dfrac{y}{N}\partial_x^2\right)x+2y\partial_x \right)\partial_x z= \left( 1+\dfrac{y}{N}\partial_x^2\right)nz \quad \rightarrow\\
		\rightarrow\; &\; x \partial_x z+\dfrac{y}{N}\partial_x^2x\partial_xz+2y\partial_x^2z=nz+\dfrac{y}{N}\partial_x^2nz  \quad \rightarrow\\
		\rightarrow\; &\; xz'+\dfrac{y}{N}\partial_x(z'+xz'')+2yz''-\dfrac{y}{N}nz''=nz \quad \rightarrow\\
		\rightarrow\; &\; \dfrac{y}{N}xz^{'''}+y\left( 2-\dfrac{n-2}{N}\right)z^{''}+xz'=nz
		\end{split}
		\end{equation*}
	\end{proof}
	
	\begin{cor}
	The $PDE$ satisfied by the $QHP$ (expected to be an extension of the heat equation) is obtained by keeping the partial derivative with respect to y of both sides of Eq. \eqref{Hoper}, namely 
	\begin{equation}\label{pde}
	\partial_y H_n(x,y;N)=\partial_x^2\left( 1+\dfrac{y}{N}\partial_x^2\right)^{N-1} x^n. 
	\end{equation}
\end{cor}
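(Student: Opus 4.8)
The plan is to differentiate the operational definition \eqref{Hoper} directly with respect to $y$, exploiting the fact that on the right-hand side the only $y$-dependence sits inside the operator $\left(1+\frac{y}{N}\partial_x^2\right)^N$, since $x^n$ is $y$-independent. The key algebraic observation is that $\hat 1$ and $\frac{y}{N}\partial_x^2$ commute, so the operator power may be expanded by Newton's binomial,
\begin{equation*}
\left(1+\frac{y}{N}\partial_x^2\right)^N=\sum_{r=0}^N\binom{N}{r}\left(\frac{y}{N}\right)^r\partial_x^{2r},
\end{equation*}
and this finite expansion can be differentiated term by term in $y$ exactly as a scalar polynomial in $y$.

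First I would compute $\partial_y\left(1+\frac{y}{N}\partial_x^2\right)^N$. Either from the termwise differentiation above, or from the chain rule (legitimate here because of commutativity), this equals $N\cdot\frac1N\partial_x^2\left(1+\frac{y}{N}\partial_x^2\right)^{N-1}=\partial_x^2\left(1+\frac{y}{N}\partial_x^2\right)^{N-1}$. Applying this operator identity to $x^n$, and using that $\partial_y$ commutes with $\partial_x$ so that it acts only on the coefficients, yields \eqref{pde} at once.

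Then I would append the two natural remarks. Since $\partial_x^2$ commutes with $\left(1+\frac{y}{N}\partial_x^2\right)^{N-1}$, the right-hand side can equivalently be written as $\left(1+\frac{y}{N}\partial_x^2\right)^{N-1}\partial_x^2x^n=n(n-1)\left(1+\frac{y}{N}\partial_x^2\right)^{N-1}x^{n-2}$, which re-expresses $\partial_yH_n(x,y;N)$ through a Hermite-type polynomial of lower degree (and rescaled parameters). Moreover, letting $N\to\infty$ and using the limit recorded in the footnote after \eqref{Hoper}, Eq. \eqref{pde} collapses to the ordinary heat equation $\partial_yH_n(x,y)=\partial_x^2H_n(x,y)$, confirming that \eqref{pde} is the announced finite-$N$ deformation of the heat equation.

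There is essentially no obstacle in this proof: the single point that deserves a line of comment is the justification that an operator power of a commuting sum is differentiable in $y$ term by term — equivalently, that the scalar chain rule transfers to this setting — which is immediate from the finite binomial expansion and from $[\hat 1,\partial_x^2]=0$.
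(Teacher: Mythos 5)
Your proof is correct and follows exactly the route the paper indicates: differentiate the operator power $\left(1+\frac{y}{N}\partial_x^2\right)^N$ in $y$, which is legitimate term by term (or by the scalar chain rule) because all the operators involved commute. The extra remarks on the commuted form and the $N\to\infty$ heat-equation limit are consistent with the paper's surrounding discussion and add nothing problematic.
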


\begin{exmp}
	Eq. \eqref{pde} can eventually be written as
	\begin{equation}\label{key}
\left\lbrace 	\begin{array}{l}
	\partial_y H_n(x,y;N)=\dfrac{\partial_x^2}{1+\frac{y}{N}\partial_x^2}H_n(x,y;N)\\[1.1ex]
	H_n(x,0;N)=x^n
	\end{array}\right. 
	\end{equation}
	The relevant (formal) solution can be obtained as 
	\begin{equation}\label{Uop}
	H_n(x,y;N)=\hat{U}_{y,N} \;x^n, \qquad\qquad \hat{U}_{y,N}=\exp\left\lbrace \int_0^y \dfrac{\partial_x^2}{1+\frac{\xi}{N}\partial_x^2}\;d\xi\right\rbrace ,
	\end{equation}
	where $\hat{U}$ is a kind of evolution operator. To be eventually written as in Eq. \eqref{Hoper}, after explicitly working out the integral in the exponent of Eq. \eqref{Uop}, we find
	\begin{equation}\label{key}
	\hat{U}_{y,N}=\exp\left\lbrace N \log \left(1+ \dfrac{y}{N}\partial_x^2\right) \right\rbrace= \left(1+ \dfrac{y}{N}\partial_x^2\right)^N.
	\end{equation}
	According to the previous definition, the $QHP$ satisfies the composition rule 
	\begin{equation}\label{compProp}
	\hat{U}_{y,N}\hat{U}_{z,N}=\left( 1+\dfrac{y+z}{N}\partial_x^2+\dfrac{yz}{N^2}\partial_x^4\right)^N. 
	\end{equation}
	Therefore, unlike the two variables $HP$ specified by an amplitude that is an exponential , the composition property $\hat{U}_{y,N}\hat{U}_{z,N}=\hat{U}_{y+z,N}$ does not hold, therefore
	\begin{equation}\label{key}
	\hat{U}_{y,N}\hat{U}_{z,N}\neq \hat{U}_{y+z;N}.
	\end{equation}
     An important (albeit naïve) consequence of Eq. \eqref{compProp} is the following composition rule
	\begin{equation}\label{Umeno}
	\hat{U}_{-y,N}\hat{U}_{y,N}x^n=\left(1-\dfrac{y^2}{N^2}\partial_x^4 \right)^N x^n, 
	\end{equation}
	which suggests the necessity of a suitable extension of $QHP$, possibly involving higher order forms, as discussed in the forthcoming section.
\end{exmp}
	
	\begin{obs}
	The non exponential nature of the $QHP$ amplitude determines the further worth to be noted consequence 
	\begin{equation}\label{Lapl}
	\hat{U}_{-y,N}\neq\hat{U}_{y,N}^{-1}=\dfrac{1}{\Gamma(N)}\int_0^\infty s^{N-1}e^{-s\left(1+\frac{y}{N}\partial_x^2 \right) }ds,
	\end{equation}
	where the r.h.s. has been obtained after exploiting standard Laplace transform methods. 
		\end{obs}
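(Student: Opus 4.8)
The plan is to establish the Observation in two steps: the Euler--Laplace integral representation of $\hat U_{y,N}^{-1}$, and then the strict inequality $\hat U_{-y,N}\neq \hat U_{y,N}^{-1}$.

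\textbf{Step 1 --- the integral representation.} I would begin from the scalar Euler integral
\begin{equation*}
a^{-N}=\frac{1}{\Gamma(N)}\int_0^\infty s^{N-1}\,e^{-sa}\,ds ,\qquad \mathrm{Re}(a)>0,\ \mathrm{Re}(N)>0 ,
\end{equation*}
obtained by the substitution $u=sa$ in $\Gamma(N)=\int_0^\infty u^{N-1}e^{-u}\,du$. The point is to promote $a$ to the operator $1+\tfrac{y}{N}\partial_x^2$. Writing $\hat T=\tfrac{y}{N}\partial_x^2$ and letting the operators act on an arbitrary polynomial $f$, the operator $\hat T$ is nilpotent on $f$, so $e^{-s(1+\hat T)}f=e^{-s}\sum_{k\ge 0}\tfrac{(-s)^k}{k!}\hat T^k f$ is a finite sum whose coefficients are polynomials in $s$; hence $\int_0^\infty s^{N-1}e^{-s(1+\hat T)}f\,ds$ converges absolutely and equals $\sum_{k\ge0}\tfrac{(-1)^k}{k!}\Gamma(N+k)\,\hat T^k f=\Gamma(N)\,(1+\hat T)^{-N}f$, the generalized binomial series again truncating by nilpotency. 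Applying $\hat U_{y,N}=(1+\hat T)^N$ (which commutes with the integrand) to this expression returns $f$, and the same holds in the opposite order; therefore the right-hand side of \eqref{Lapl} is a genuine two-sided inverse of $\hat U_{y,N}$ on the space of polynomials, i.e. $\hat U_{y,N}^{-1}$.

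\textbf{Step 2 --- the inequality.} Here I would simply use the composition rule already derived: by \eqref{Umeno},
\begin{equation*}
\hat U_{-y,N}\hat U_{y,N}\,x^n=\Bigl(1-\tfrac{y^2}{N^2}\partial_x^4\Bigr)^N x^n ,
\end{equation*}
whose right-hand side carries a nonzero $\partial_x^4$-term and therefore differs from $x^n$ as soon as $n\ge 4$ and $y\neq 0$. Hence $\hat U_{-y,N}\hat U_{y,N}\neq\hat 1$ as operators on polynomials, whereas $\hat U_{y,N}^{-1}\hat U_{y,N}=\hat 1$ by Step 1; consequently $\hat U_{-y,N}\neq\hat U_{y,N}^{-1}$. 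Equivalently, one may compare the two operators directly: $\hat U_{-y,N}$ is a polynomial of degree $N$ in $\partial_x^2$, while the expansion of $(1+\tfrac{y}{N}\partial_x^2)^{-N}$ has infinitely many nonzero powers of $\partial_x^2$, so the two already disagree on any monomial $x^m$ with $\lfloor m/2\rfloor>N$.

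The main obstacle is not computational but conceptual: one must fix the functional setting in which all of this is rigorous rather than merely symbolic --- namely the action on the polynomial space, where $\partial_x^2$ is locally nilpotent, so that the Euler integral, the binomial series for $(1+\hat T)^{-N}$, and the interchange of $\int_0^\infty(\cdot)\,ds$ with $\hat U_{y,N}$ are all legitimate. Once that is granted, Step 1 is the scalar Gamma-integral identity read off term by term, and Step 2 is a degree count.
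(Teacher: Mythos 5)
Your proposal is correct and follows essentially the route the paper intends: the right-hand side of \eqref{Lapl} is exactly the ``standard Laplace transform'' (Euler Gamma-integral) representation of $\left(1+\frac{y}{N}\partial_x^2\right)^{-N}$, which you justify carefully via local nilpotency of $\partial_x^2$ on polynomials, and the inequality follows from the composition rule \eqref{Umeno}, precisely the fact the paper relies on when it notes that $\hat{U}_{y,N}\hat{U}_{z,N}\neq\hat{U}_{y+z,N}$. Your added rigor about the functional setting and the alternative degree-count argument go beyond the paper's one-line justification but do not change the approach.
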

	
	We will see in the following that Eq. \eqref{Lapl} is of pivotal importance for the definition of the orthogonal properties of the $QHP$.
	
	\begin{obs}
	Before closing this section, we notice that Eq. \eqref{eqd} can be generalized $\forall m\in\mathbb{N}$ such that 
		\begin{equation}\label{eqdm}
	 \left( x+\dfrac{m\;y\;\partial^{m-1}_x}{1+\frac{y}{N}\partial_x^m}\right)\partial_x H^{(m)}_n(x,y;N)=nH^{(m)}_n(x,y;N) .
	\end{equation}
	and, by following the same procedure provided in the Corollary \ref{c3}, it is possible to deduce the relative differential equantion.
	\end{obs}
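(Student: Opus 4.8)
The plan is to recognize \eqref{eqdm} as the shadow, under the differential realization \eqref{MP}, of the universal operator identity $\hat{M}\hat{P}\,p_n = n\,p_n$ valid for every quasi-monomial set. Concretely, I would first generalize the Definition by replacing the amplitude \eqref{newA} with $A(p)=\left(1+\frac{y}{N}p^m\right)^N$, which is still entire (indeed a polynomial of degree $mN$ in $p$), so that the whole Appéll apparatus of Section~2 carries over verbatim: the associated set $H^{(m)}_n(x,y;N)=\left(1+\frac{y}{N}\partial_x^m\right)^N x^n$ is quasi-monomial with $\hat{P}=\partial_x$ and $\hat{M}=x+\frac{A'(\partial_x)}{A(\partial_x)}$.

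Next I would compute the logarithmic derivative of the amplitude, $\frac{A'(p)}{A(p)}=\frac{d}{dp}\log A(p)=\frac{m\,y\,p^{m-1}}{1+\frac{y}{N}p^m}$, whence $\hat{M}=x+\frac{m\,y\,\partial_x^{m-1}}{1+\frac{y}{N}\partial_x^m}$; the inverse operator $\left(1+\frac{y}{N}\partial_x^m\right)^{-1}$ is unambiguous here because on any polynomial its Neumann (geometric) series terminates. Then, from the general quasi-monomiality relations \eqref{pn}--\eqref{pn2}, namely $\hat{P}H^{(m)}_n = n\,H^{(m)}_{n-1}$ and $\hat{M}H^{(m)}_{n-1}=H^{(m)}_n$, one obtains $\hat{M}\hat{P}\,H^{(m)}_n = n\,\hat{M}H^{(m)}_{n-1}=n\,H^{(m)}_n$; substituting the two realizations just found yields exactly \eqref{eqdm}.

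Finally, to extract the differential equation proper I would mimic Corollary~\ref{c3}: multiply \eqref{eqdm} on the left by $1+\frac{y}{N}\partial_x^m$, use the commutator $[\partial_x^m,x]=m\,\partial_x^{m-1}$ to move the factor $x$ to the left of the derivatives, and collect powers of $\partial_x$. This produces the $(m+1)$-th order ODE $\frac{y}{N}\,x\,z^{(m+1)}+y\left(m-\frac{n-m}{N}\right)z^{(m)}+x\,z'=n\,z$ with $z=H^{(m)}_n(x,y;N)$, which reduces to the equation of Corollary~\ref{c3} for $m=2$ and, for large $N$, to the $m$-th order analogue of the Hermite equation.

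I do not expect a serious obstacle: the only point requiring care is the meaning of the pseudo-differential operator $\left(1+\frac{y}{N}\partial_x^m\right)^{-1}$, which is resolved by observing that $H^{(m)}_n$ is a polynomial, so all the operators in play act as genuine finite-order differential operators and the inverse is simply the terminating series. A secondary, purely bookkeeping point is to verify the commutator identity $[\partial_x^m,x]=m\,\partial_x^{m-1}$ and to cross-check the resulting coefficient of $z^{(m)}$ against the $m=2$ case of Corollary~\ref{c3}.
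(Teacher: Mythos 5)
Your proposal is correct and follows essentially the route the paper intends: Eq.~\eqref{eqdm} is just the monomiality identity $\hat{M}\hat{P}p_n=np_n$ with the realization \eqref{MP} applied to the amplitude $A(p)=\left(1+\frac{y}{N}p^m\right)^N$, whose logarithmic derivative gives exactly the operator appearing in \eqref{eqdm}. Your explicit $(m+1)$-th order ODE, $\frac{y}{N}xz^{(m+1)}+y\left(m-\frac{n-m}{N}\right)z^{(m)}+xz'=nz$, which the paper leaves unstated, checks out against Corollary~\ref{c3} at $m=2$.
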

	
	\section{Multivariable $QHP$}
	
	Higher order Hermite polynomials (also called \textit{Lacunary HP}) are defined through the operational rule \cite{Babusci,Khan}
	\begin{equation}\label{key}
	H_n^{(m)}(x,y)=e^{y\partial_x^m}x^n, \qquad\quad  \forall m\in\mathbb{N}
	\end{equation}
	and, in analogy, the Higher order $QHP$ are specified by\footnote{We should adopt for the polynomials $H_n(x,y;N)$ the notation $H^{(2)}_n(x,y;N)$, we drop however the superscript for $m=2$ and add it whenever ambiguities arise.} 
	\begin{equation}\label{key}
	H^{(m)}_n(x,y;N)=\hat{U}^{(m)}_{y,N}\;x^n,\qquad\qquad \hat{U}^{(m)}_{y,N}=\left(1+\dfrac{y}{N}\partial_x^m \right)^N. 
	\end{equation}
	
	\begin{exmp}
	According to Eq. \eqref{Umeno}, we find
	\begin{equation}\label{key}
    \hat{U}_{-y,N}\hat{U}_{y,N}\;x^n=\hat{U}^{(4)}_{-y^2,N}\;x^n=
    H_n^{(4)}\left(x,-y^2;N \right)
    \end{equation}
	and, more in general, 
	\begin{equation}\label{key}
	\hat{U}^{(m)}_{-y,N}\hat{U}^{(m)}_{y,N}x^n=\hat{U}^{(2m)}_{-y^2,N}\;x^n=
	H_n^{(2m)}\left(x,-y^2;N \right)
	\end{equation}
		\end{exmp}
	
	\begin{exmp}
	Before going further, we consider the definition of the $QHP$ of order one, which will be referred as \textit{Quasi Binomial Polynomials} ($QBP$), namely
	\begin{equation}\label{key}
	H_n^{(1)}(x,y;N)=\left(1+\dfrac{y}{N}\partial_x \right)^N x^n .
	\end{equation}
	For large $N$ they reduce to $(x+y)^n$ hence the name.
The explicit form of this family of polynomials, writes
	\begin{equation}\label{key}
	H_n^{(1)}(x,y;N)=\sum_{r=0}^N \binom{N}{r}\left(\dfrac{y}{N} \right)^r \partial_x^r x^n= \sum_{r=0}^{\min\left[N,\;n\right] } \binom{N}{r}\left(\dfrac{y}{N} \right)^r\dfrac{n!}{(n-r)!}x^{n-r}.
	\end{equation}
	The same strategy adopted in Corollary \ref{c3}, by exploiting Eq. \eqref{eqdm}, yields for the $QBP$ the $ODE$  
	\begin{equation}\label{key}
     \dfrac{y}{N}xz^{''}+\left[ (x+y)-(n-1)\dfrac{y}{N}\right] z'=nz,\qquad\qquad z=H_n^{(1)}(x,y;N)
    \end{equation}
	and the $PDE$
    \begin{equation}\label{pdF}
 	\left\lbrace 	\begin{array}{l}
	\partial_y F(x,y)=\dfrac{\partial_x}{1+\frac{y}{N}\partial_x}F(x,y)\\[1.2ex]
	F(x,0)=x^n
	\end{array} .\right. 
	\end{equation}
	The last identity can also be cast in the integro-differential form 
	\begin{equation}\label{res}
	\partial_y F(x,y)=\partial_x\int_0^\infty e^{-s}F\left(x-\dfrac{y}{N}s,\;y \right)ds
	\end{equation}
	indeed, the Laplace transform provides the integral representation
	\begin{equation}\label{key}
	\dfrac{1}{1+\frac{y}{N}\partial_x}=\int_0^\infty e^{-s\left( 1+\frac{y}{N}\partial_x\right) }ds
	\end{equation}
	which, one inserted in Eq. \eqref{pdF}, yields
	\begin{equation}\label{key}
	\partial_y F(x,y)=\partial_x\int_0^\infty e^{-s}e^{-\frac{ys}{N}\partial_x }ds F(x,y)
	\end{equation}
	and, after exploiting the shift operator identity $e^{a\partial_x}f(x)=f(x+a)$ \cite{Babusci}, we obtain Eq. \eqref{res}.
\end{exmp}
\begin{exmp}
We can combine the various definition given before to introduce three variables $QHP$ as 
	\begin{equation}\label{key}
	\begin{split}
	H_n^{(1,2)}(x,y_1,y_2;N):&=
	\left(\hat{U}_{y_1,N}^{(1)}\;\hat{U}_{y_2,N}^{(2)}  \right) x^n=
	\left(1 +\dfrac{y_1}{N}\partial_x\right)^N\left(1 +\dfrac{y_2}{N}\partial_x^2\right)^Nx^n=\\
	&=
	\left(1 +\dfrac{y_1}{N}\partial_x\right)^N H_n(x,y_2;N)=\\
& =	n!  \sum_{r=0}^{\min\left[N,n \right] } \binom{N}{r}\left(\dfrac{y_1}{N} \right)^{\!r}\dfrac{H_{n-r}(x,y_2;N)}{(n-r)!}.
%
		\end{split}
	\end{equation}
	Further generalizations can easily be obtained. For example the $m$-th variable extension reads
	\begin{equation}\label{key}
   H_n^{(1,\dots ,m)}(x,y_1,\dots,y_m;N)=\left( \prod_{s=1}^n \hat{U}_{y_s,N}^{(s)}\right) x^n.
\end{equation}
	\end{exmp}

	The examples we have just touched on in this section yields an idea of the possible extensions of this family of polynomials, which will be more carefully discussed in a forthcoming research.
	
	\section{Final Comments}
	
	We have already mentioned the possible orthogonal nature of the $QHP$, in this section we address the problem by the use of the techniques developed in Refs. \cite{Mono,Herm}.
	
	\begin{prop}
We assume that a given function $f(x)$ can be expanded in terms of $QHP$, according to the identity\footnote{The reasons of $``-\!\mid y \mid"$ will be clarified below.}
	\begin{equation}\label{f1}
	f(x)=\sum_{n=0}^\infty a_n H_n(x,-\mid y \mid;N)
	\end{equation}
which can be inverted, thus yielding
	\begin{equation}\label{fx}
    \dfrac{1}{\left(1-\dfrac{\mid y\mid}{N}\partial_x^2 \right)^N }\;f(x)=\sum_{n=0}^\infty a_n x^n.
    \end{equation}
	The use of Eq. \eqref{Lapl} allows to cast the l.h.s. of Eq. \eqref{fx} in the form
	\begin{equation}\label{GfintS}
	\dfrac{1}{\Gamma(N)}\int_0^\infty s^{N-1}e^{-s\left( 1-\frac{\mid y \mid}{N}\partial_x^2\right) }ds\;f(x)=\sum_{n=0}^\infty a_n x^n.
	\end{equation}
		\end{prop}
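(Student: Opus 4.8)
The plan is to read the three displayed equalities as one chain of operator manipulations, the infinite series being carried along term by term. First I would rewrite the assumed expansion \eqref{f1} by means of the operational definition of the $QHP$. By \eqref{Hoper} (equivalently by \eqref{Uop}) one has $H_n(x,-\mid y\mid;N)=\hat U_{-\mid y\mid,N}\,x^n=\bigl(1-\tfrac{\mid y\mid}{N}\partial_x^2\bigr)^N x^n$; pulling this finite-order differential operator outside the summation gives
\[
f(x)=\Bigl(1-\frac{\mid y\mid}{N}\partial_x^2\Bigr)^{N}\sum_{n=0}^\infty a_n\,x^n .
\]
Applying the inverse operator $\bigl(1-\tfrac{\mid y\mid}{N}\partial_x^2\bigr)^{-N}$ to both sides reproduces \eqref{fx} at once. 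The reason the second argument is taken to be $-\mid y\mid$ rather than a generic $y$ is precisely to make this step legitimate: in Fourier variables $\partial_x^2\mapsto -k^2$, so the symbol of $1-\tfrac{\mid y\mid}{N}\partial_x^2$ equals $1+\tfrac{\mid y\mid}{N}k^2\ge 1>0$, whence the operator is boundedly invertible on the relevant class of functions and, at the level of the Laplace representation, the representing integral below will converge.

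Next I would invoke \eqref{Lapl}. Reading that identity with $y$ replaced by $-\mid y\mid$, i.e.
\[
\Bigl(1-\frac{\mid y\mid}{N}\partial_x^2\Bigr)^{-N}=\frac{1}{\Gamma(N)}\int_0^\infty s^{N-1}\,e^{-s\left(1-\frac{\mid y\mid}{N}\partial_x^2\right)}\,ds ,
\]
(which is just the Euler integral $\lambda^{-N}=\Gamma(N)^{-1}\int_0^\infty s^{N-1}e^{-\lambda s}\,ds$ with $\lambda=1-\tfrac{\mid y\mid}{N}\partial_x^2$, noting that $e^{-s(1-\frac{\mid y\mid}{N}\partial_x^2)}=e^{-s}e^{\frac{s\mid y\mid}{N}\partial_x^2}$ is $e^{-s}$ times a bona fide heat (Gaussian) smoothing operator with positive time $\tfrac{s\mid y\mid}{N}$), and inserting this representation on the left-hand side of \eqref{fx}, one obtains \eqref{GfintS}. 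This completes the argument.

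The only genuinely delicate point is analytic rather than algebraic: one must work in a function space on which $\hat U_{-\mid y\mid,N}$ is boundedly invertible and on which the integral $\int_0^\infty s^{N-1}e^{-s(1-\frac{\mid y\mid}{N}\partial_x^2)}\,ds$ both converges and commutes with the summation over $n$ — for instance functions whose Fourier transform decays sufficiently fast, or, at the formal level, power series handled coefficient by coefficient. Granting the same setting already adopted to derive \eqref{Lapl} and to write the expansion \eqref{f1}, the three equalities are immediate consequences of the operator identities above. In particular, the sign convention $-\mid y\mid$ is exactly the hypothesis that secures the positivity $1+\tfrac{\mid y\mid}{N}k^2>0$ and hence the convergence of the representing integral, which is the clarification promised in the footnote to \eqref{f1}.
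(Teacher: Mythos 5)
Your argument is correct and follows exactly the route the paper intends: the proposition is justified there simply by reading $H_n(x,-\mid y\mid;N)=\bigl(1-\tfrac{\mid y\mid}{N}\partial_x^2\bigr)^N x^n$ from Eq.~\eqref{Hoper}, pulling the operator through the sum, inverting it, and then substituting $y\to-\mid y\mid$ in the Laplace representation \eqref{Lapl}. Your observation that the sign $-\mid y\mid$ makes the symbol $1+\tfrac{\mid y\mid}{N}k^2>0$ and turns $e^{s\frac{\mid y\mid}{N}\partial_x^2}$ into a genuine Gaussian smoothing is precisely the clarification the paper defers to the subsequent corollary (``It holds for $s\frac{\mid y\mid}{N}\geq 0$, hence the choice of the sign'').
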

	
	\begin{cor}
    Eq. \eqref{GfintS} can be so further elaborated:
	
	\begin{enumerate}
		\item  We apply the Gauss-Weierstrass transform \cite{SLicciardi} to write
     	    \begin{equation}\label{key}
     	    	 \begin{split}
               e^{s\frac{\mid y \mid}{N}\partial_x^2}f(x)&=\dfrac{1}{2\sqrt{\pi\;s\frac{\mid y \mid}{N}}}\int_{-\infty}^\infty \exp\left\lbrace -\dfrac{(x-\xi)^2}{4\;s\frac{\mid y \mid}{N}}\right\rbrace f(\xi)d\xi=\\
              &  \;=\dfrac{1}{2\sqrt{\pi\;s\frac{\mid y \mid}{N}}}
               	\int_{-\infty}^\infty  e^{-
               		\frac{1}
               		{4\;s\frac{\mid y \mid}{N}}
               		\xi^2} e^{\frac{x}{2s\frac{\mid y \mid}{N}}\xi}e^{-\frac{x^2}{4s\frac{\mid y \mid}{N}}}d\xi.        
               	\end{split}
            \end{equation}
	It holds for $s\frac{\mid y \mid}{N}\geq 0$ (hence the choice of the sign in the expansion \eqref{f1}).\\
	
	\item We use the two variable Hermite generating function\footnote{We have $\sum_{n=0}^\infty \frac{t^n}{n!}H_n(x,y)=e^{xt+yt^2}$.} \cite{Babusci} to write
	 \begin{equation}\label{gf}
	e^{s\frac{\mid y \mid}{N}\partial_x^2}f(x)=
	\dfrac{1}{2\sqrt{\pi\;s\frac{\mid y \mid}{N}}}	
	\sum_{n=0}^\infty \dfrac{x^n}{n!}
	\int_{-\infty}^\infty 
	H_n\left( \dfrac{\xi}{2s\frac{\mid y \mid}{N}}, - \dfrac{1}{4 s\frac{\mid y \mid}{N}}\right) 
	 e^{-
	 	\frac{\xi^2}
	 	{4\;s\frac{\mid y \mid}{N}}
	 	} f(\xi) d\xi .	
	\end{equation}

	\item We insert the result of Eq. \eqref{gf}, in Eq. \eqref{GfintS} and compare the like $x$ powers, thus eventually finding
	\begin{equation}\label{last}
	\begin{split}
	& a_n=\dfrac{1}{\Gamma(N)n!\;2\sqrt{\pi\dfrac{\mid y\mid}{N}}}\int_0^\infty s^{N-\frac{3}{2}}e^{-s}\; {}_nG_{y,N}(s)ds,\\
	& {}_nG_{y,N}(s)=\int_{-\infty}^\infty 
	H_n\left( \dfrac{\xi}{2s\frac{\mid y \mid}{N}}, - \dfrac{1}{4 s\frac{\mid y \mid}{N}}\right) 
	e^{-
		\frac{\xi^2}
		{4\;s\frac{\mid y \mid}{N}}
	} f(\xi) d\xi .
	\end{split}
	\end{equation}
		\end{enumerate} 
	According to the above results the expansion holds only if the integrals appearing in Eq. \eqref{last} are converging. In order to provide an example we consider the generalization of the Glaisher formula \cite{Glaisher}. Namely Eq. \eqref{fx}, for $f(x)=e^{-x^2}$, becomes 
	\begin{equation}\label{key}
	\begin{split}
	F(x,y;N)&=\dfrac{1}{\left(1-\dfrac{\mid y\mid}{N}\partial_x^2 \right)^N }f(x)=\dfrac{1}{\Gamma(N)}\int_0^\infty s^{N-1}e^{-s\left( 1-\frac{\mid y \mid}{N}\partial_x^2\right) }e^{-x^2}ds=\\
	& = \dfrac{1}{\Gamma(N)}\int_0^\infty s^{N-1}e^{-s} \dfrac{e^{-\frac{x^2}{1+4\frac{\mid y\mid}{N}s}}}{\sqrt{1+4\dfrac{\mid y\mid}{N}s}}\;ds.
		\end{split}
	\end{equation}
	 For very large $N$,\; $\frac{1}{\left(1-\frac{\mid y\mid}{N}\partial_x^2 \right)^N }e^{-x^2}$, reduces to the ordinary Glaisher identity
	\begin{equation}\label{key}
	\lim\limits_{N\rightarrow\infty}F(x,y;N)=e^{\mid y\mid\partial_x^2}e^{-x^2}=\dfrac{1}{\sqrt{1+4y}}e^{-\frac{x^2}{1+4y}}.
	\end{equation}
	In Figs. \ref{Fig1} we have reported $F(x,y;N)$ vs. $x$ for different values of $N$ and $y$.
	
	\begin{figure}[h]
		\centering
		\begin{subfigure}[c]{0.48\textwidth}
			\includegraphics[width=1.\linewidth]{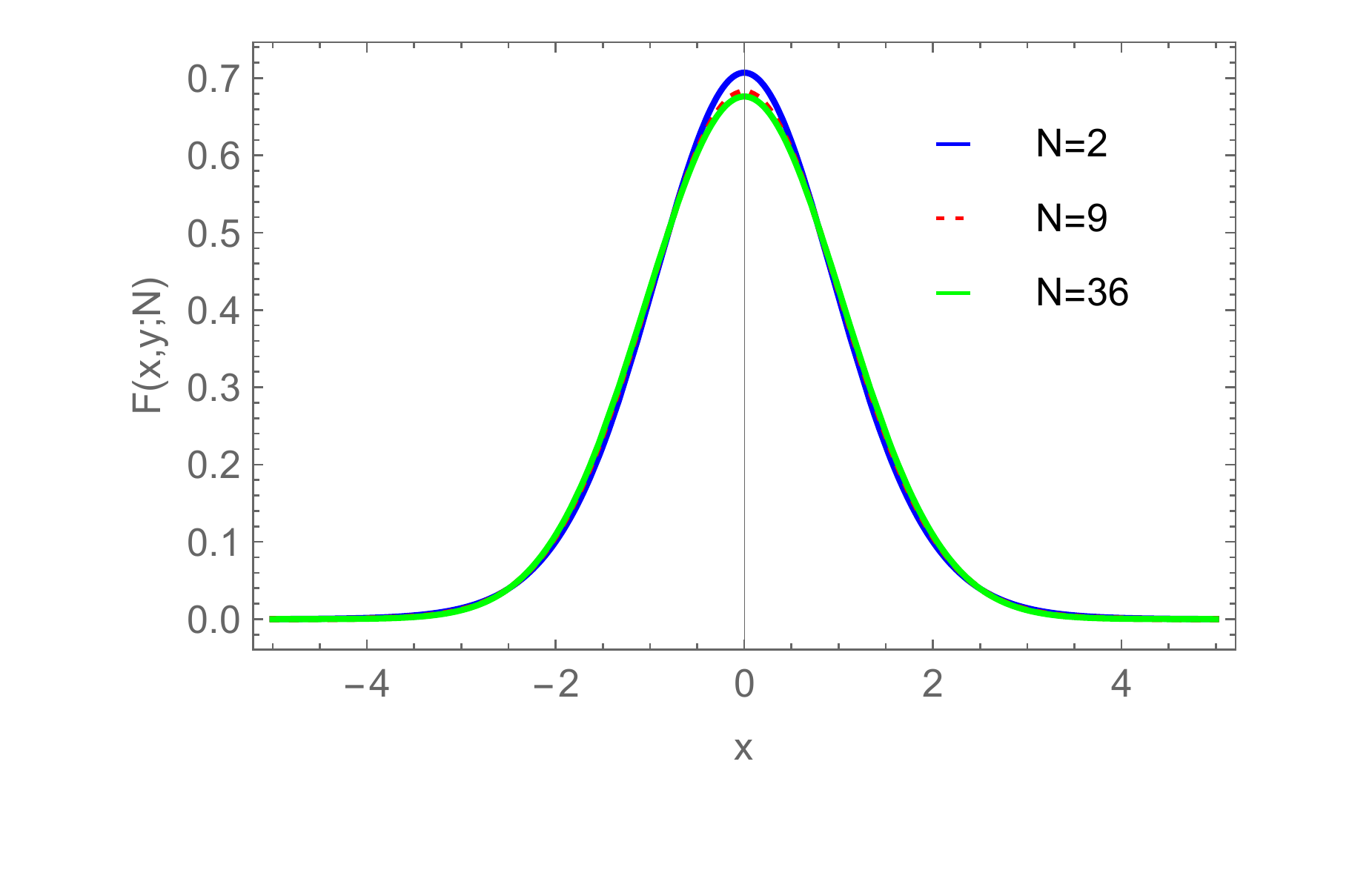}
			\caption{$y=0.3$}
		\end{subfigure}
		\begin{subfigure}[c]{0.48\textwidth}
			\includegraphics[width=1.\linewidth]{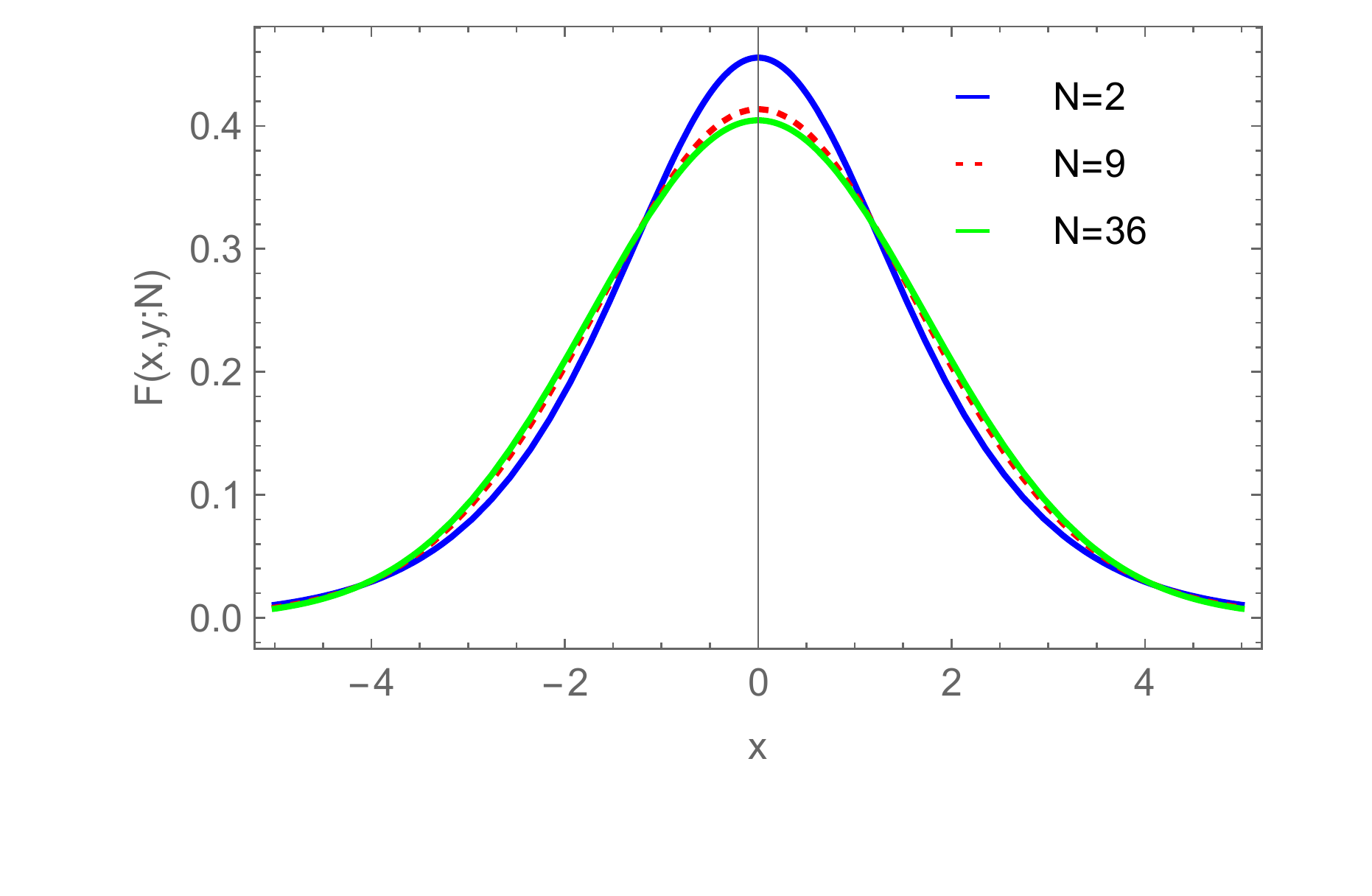}
			\caption{$y=1.3$}
		\end{subfigure}	
		\caption{$F(x,y;N)$ vs. $x$ for different values of $N$ and $y$}
		\label{Fig1}
	\end{figure}
	
\end{cor}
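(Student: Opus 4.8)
The plan is to execute the three reductions announced in the corollary and then specialise to the Glaisher example. First I would split the operator appearing in \eqref{GfintS} as $e^{-s(1-\frac{\mid y \mid}{N}\partial_x^2)}=e^{-s}\,e^{s\frac{\mid y \mid}{N}\partial_x^2}$, isolating the scalar $e^{-s}$ from the heat propagator $e^{\kappa\partial_x^2}$ with $\kappa=s\mid y \mid/N\ge 0$. For $\kappa>0$ this propagator is exactly the Gauss--Weierstrass transform, and expanding $(x-\xi)^2=\xi^2-2x\xi+x^2$ in its Gaussian kernel factorises the integrand into $e^{-\xi^2/(4\kappa)}$, the $x$--dependent factor $e^{x\xi/(2\kappa)}$, and $e^{-x^2/(4\kappa)}$; this is the first displayed identity of the corollary, and the sign constraint $\kappa\ge 0$ is precisely why the expansion variable in \eqref{f1} is chosen to be $-\mid y \mid$.

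\textbf{Step 2 (generating-function resummation).} Next I would recognise the $x$--dependent piece $\exp\{x\,\xi/(2\kappa)-x^2/(4\kappa)\}$ as the bilateral generating function of the two--variable Hermite polynomials. From $\sum_{n\ge 0}\frac{t^n}{n!}H_n(u,v)=e^{ut+vt^2}$, the substitution $t=x$, $u=\xi/(2\kappa)$, $v=-1/(4\kappa)$ reproduces that exponential, so it may be replaced by $\sum_{n\ge 0}\frac{x^n}{n!}H_n\!\left(\frac{\xi}{2\kappa},-\frac{1}{4\kappa}\right)$. Since the powers $x^n/n!$ no longer depend on $\xi$, they come out of the $\xi$--integral, which produces \eqref{gf}.

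\textbf{Step 3 (coefficient matching and the example).} I would then insert \eqref{gf} into the left-hand side of \eqref{GfintS}, interchange the sum over $n$ with the $s$-- and $\xi$--integrations, and equate the coefficients of $x^n$ with those of the right-hand side $\sum_n a_n x^n$. Collecting the scalar $e^{-s}$, the Gamma weight $s^{N-1}/\Gamma(N)$ and the normalisation $\frac{1}{2\sqrt{\pi\kappa}}=\frac{1}{2}\sqrt{\frac{N}{\pi\mid y \mid}}\,s^{-1/2}$ turns the $s$--measure into $s^{N-3/2}e^{-s}\,ds$, which is exactly \eqref{last} with the stated kernel ${}_nG_{y,N}(s)$. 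For the Glaisher example one takes $f(x)=e^{-x^2}$ in \eqref{fx}, rewrites the left-hand side through \eqref{Lapl}, and applies the classical Glaisher identity $e^{\kappa\partial_x^2}e^{-x^2}=(1+4\kappa)^{-1/2}e^{-x^2/(1+4\kappa)}$ under the $s$--integral with $\kappa=s\mid y \mid/N$; letting $N\to\infty$ via $\left(1-\frac{\mid y \mid}{N}\partial_x^2\right)^{-N}\to e^{\mid y \mid\partial_x^2}$ (equivalently, the $\Gamma(N,1)$ variable $s$ concentrating so that $s\mid y \mid/N\to\mid y \mid$) recovers the ordinary Glaisher formula, and the finite--$N$ curves of Fig. \ref{Fig1} display the interpolation.

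\textbf{Main obstacle.} The delicate part is analytic rather than algebraic: justifying the interchange of the infinite sum with the two integrals, and in fact the very existence of the coefficients $a_n$ in \eqref{f1}. One must restrict $f$ to a class on which the heat semigroup $e^{\kappa\partial_x^2}f$ is defined and entire in $x$ (functions of sub-Gaussian growth will do), and then verify convergence of the $s$--integral in \eqref{last} near $s=0$, where $\kappa=s\mid y \mid/N\to 0$ makes the arguments of $H_n$ and the Gaussian exponent blow up. This is exactly the ``the expansion holds only if the integrals appearing in \eqref{last} are converging'' caveat; the exponential example is exhibited precisely because there every manipulation can be carried out in closed form and the resulting integral converges for all $N$.
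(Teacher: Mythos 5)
Your proposal is correct and follows essentially the same route as the paper: the factorisation $e^{-s(1-\frac{\mid y\mid}{N}\partial_x^2)}=e^{-s}e^{s\frac{\mid y\mid}{N}\partial_x^2}$, the Gauss--Weierstrass kernel with $(x-\xi)^2$ expanded, the resummation of $e^{x\xi/(2\kappa)-x^2/(4\kappa)}$ via the Hermite generating function with $t=x$, the coefficient matching that converts $s^{N-1}$ into $s^{N-3/2}$, and the Laplace-transform treatment of the Glaisher example are all exactly the steps the paper takes. Your added remarks on the convergence and interchange issues go slightly beyond the paper's brief caveat but do not change the argument.
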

	
	The definition of higher order ($QHP$) is not unique and another possibility is offered by the relation
	\begin{equation}\label{pr}
	H_n^{(q,p)}(x,y,z;N)=\left( 1+\dfrac{y}{N}\partial_x^q+\dfrac{z}{N}\partial_x^p\right)^N x^n 
	\end{equation}
	where $q<p$ are relatively prime integers. The definition in Eq. \eqref{pr} allows to write the composition identity \eqref{compProp} as  
	\begin{equation}\label{key}
	\hat{U}_{y,N}\hat{U}_{z,N}x^n=H_n^{(2,4)}\left(x,y+z,yz;N \right) ^N\;x^n.
	\end{equation}
	In this paper we have gone through different aspects of the theory of Hermite polynomials, which has allowed the introduction of a family of Quasi Hermite polynomials. The relevant properties have been studied with the help of the formalism of Monomiality. \\
	
	In a forthcoming paper we will extend the present analysis to the definition of two variables Quasi Laguerre polynomials.
\\
	
\textbf{Acknowledgements}\\
 The work of Dr. S. Licciardi was supported by an Enea Research Center individual fellowship and under the auspices of INDAM’s GNFM (Italy).\\

%

	{}
	
\end{document}